\begin{document}

\title{Recurrence relations of poly-Cauchy numbers by the $r$-Stirling transform 
}  
\author{
Takao Komatsu\\
\small Department of Mathematical Sciences, School of Science\\
\small Zhejiang Sci-Tech University\\
\small Hangzhou 310018 China\\
\small \texttt{komatsu@zstu.edu.cn}
}

\date{
}

\maketitle

\def\fl#1{\left\lfloor#1\right\rfloor}
\def\cl#1{\left\lceil#1\right\rceil}
\def\ang#1{\left\langle#1\right\rangle}
\def\stf#1#2{\left[#1\atop#2\right]} 
\def\sts#1#2{\left\{#1\atop#2\right\}}

\newtheorem{theorem}{Theorem}
\newtheorem{Prop}{Proposition}
\newtheorem{Cor}{Corollary}
\newtheorem{Lem}{Lemma}

\begin{abstract}  
We give some formulas of poly-Cauchy numbers by the $r$-Stirling transform. In the case of the classical or poly-Bernoulli numbers, the formulas are with Stirling numbers of the first kind. In our case of the classical or poly-Cauchy numbers, the formulas are with Stirling numbers of the second kind. We also discuss annihilation formulas for poly-Cauchy number with negative indices.    
\\ 
{\bf Keywords:} Stirling transform, $r$-Stirling numbers, poly-Cauchy numbers, annihilation formulas \\
{\bf MR Subject Classifications:} Primary 11B75; Secondary 11B73, 11B65, 11B37, 11B68, 05A19      
\end{abstract}

\allowdisplaybreaks

\section{Introduction} 

A rich number of results by several authors have been developed concerning the 
combinatorial and analytical properties of a family of numbers and polynomials called poly-Bernoulli and poly-Cauchy numbers and polynomials, respectively. These new sequences have applications in Number Theory (for example, relations with the generalized zeta function and multiple zeta function) and Enumerative Combinatorics (for example, lonesum matrices and set partitions).  
Recently, Ohno and Sasaki \cite{OS1,OS2} establish some formulae (including annihilation formulae) of poly-Bernoulli numbers and polynomials by using $r$-Stirling numbers. We study those of poly-Cauchy numbers and polynomials. On the other hand, expressing Bernoulli numbers or their generalizations in terms of Stirling numbers and harmonic numbers has been an active research.    

In this paper, we mainly focus on the $r$-Stirling transform (analogous to the well-known binomial transform) of the poly-Cauchy numbers of both kinds.  Expressing poly-Cauchy numbers in terms of Stirling numbers and harmonic numbers is also an active research. Some formulae are related to shifted poly-Cauchy numbers.
We can find annihilation formulas for poly-Cauchy numbers with negative indices (analogous to the results given by Ohno and Sasaki for the poly-Bernoulli case). To obtain the results we use some combinatorial properties of the $r$-Stirling numbers of the second kind.

The binomial transform takes the sequence $\{a_n\}_{n\ge 0}$ to the sequence $\{b_n\}_{n\ge 0}$ via the transformation 
$$
b_n=\sum_{k=0}^n\binom{n}{k}a_k
$$ 
with its inverse 
$$
a_n=\sum_{k=0}^n(-1)^{n-k}\binom{n}{k}b_k 
$$ 
(see, e.g., \cite{Gould,Spivey}). For example, for Fibonacci numbers $F_n$, defined by $F_n=F_{n-1}+F_{n-2}$ ($n\ge 3$) with $F_1=F_2=1$, we have $a_n=F_n\rightarrow b_n=F_{2 n}$ and $a_n=(-1)^n F_n\rightarrow b_n=-F_{n}$.   
Similarly, the Stirling transform takes the sequence $\{a_n\}_{n\ge 0}$ to the sequence $\{b_n\}_{n\ge 0}$ via the transformation 
\begin{equation}
b_n=\sum_{k=0}^n\stf{n}{k}a_k
\label{eq:trans1}
\end{equation} 
with its inverse 
\begin{equation}
a_n=\sum_{k=0}^n(-1)^{n-k}\sts{n}{k}b_k 
\label{eq:trans2}
\end{equation}
(see, e.g., \cite{Spivey}). 
Here, the (unsigned) Stirling numbers of the first kind $\stf{n}{k}$ are defined by the coefficients of the rising factorial:
$$
(x)^{(n)}=x(x+1)\cdots(x+n-1)=\sum_{k=0}^n\stf{n}{k}x^k\,. 
$$   
The Stirling numbers of the second kind are expressed as 
$$
\sts{n}{k}=\frac{1}{k!}\sum_{i=0}^k(-1)^i\binom{k}{i}(k-i)^n\,. 
$$ 
For example, by the Stirling transform we have $a_n=\ell^n\rightarrow b_n=(n+\ell-1)!/(\ell-1)!$ ($\ell\ge 1$) and $a_n=(n)^{(l)}\rightarrow b_n=l!\stf{n+1}{l+1}$ ($l\ge 0$).   
Many more examples for binomial transforms and Stirling transforms can be seen in \cite{oeis}.

There are many kinds of generalizations of the Stirling numbers. One of the most interesting ones is the $r$-Stirling number in \cite{Broder}. 
The main purpose of this paper is to yield the relations associated with the inverse and extended transformation of (\ref{eq:trans1}) with (\ref{eq:trans2}). Namely, we consider the transformation by the $r$-Stirling numbers of the second kind: 
\begin{equation}
b_n=\sum_{k=0}^n\sts{n}{k}_r a_k
\label{orth-ri}
\end{equation} 
with its inverse 
\begin{equation}
a_n=\sum_{k=0}^n(-1)^{n-k}\stf{n}{k}_r b_k\,,  
\label{orth-r}
\end{equation}  
that is yielded from the orthogonal relation in (\ref{or-ri}) and (\ref{or-r}). More details concerning the $r$-Stirling numbers, which are used in this paper,  are mentioned in the next section.  
There are some different definitions for the $r$-Stirling numbers of both kinds. They can defined combinatorially as follows.  
The (unsigned) $r$-Stirling numbers of the first kind $\stf{n}{m}_r$ are defined as the number of permutations of the set $\{1,\dots,n\}$ having $m$ cycles, such that the numbers $1,\dots,r$ are in distinct cycles. The $r$-Stirling numbers of the second kind $\sts{n}{m}_r$ are defined as the number of ways to partition the set $\{1,\dots,n\}$ into $m$ non-empty disjoint subsets such that the numbers $1,\dots,r$ are in different subsets.

Bernoulli number $B_n$ with its various generalizations is one of the most interesting and most popular numbers in history. For every integer $k$, the poly-Bernoulli numbers $\mathbb B_n^{(k)}$ are defined by 
$$
\frac{{\rm Li}_k(1-e^{-x})}{1-e^{-x}}=\sum_{n=0}^\infty\mathbb B_n^{(k)}\frac{x^n}{n!}\,,
$$ 
where 
$$
{\rm Li}_k(z)=\sum_{n=1}^\infty\frac{z^n}{n^k}
$$ 
is the polylogarithm function.  When $k=1$, $\mathbb B_n=\mathbb B_n^{(1)}$ is the Bernoulli number, defined by 
$$
\frac{x e^x}{e^x-1}=\sum_{n=0}^\infty\mathbb B_n\frac{x^n}{n!}
$$ 
with $\mathbb B_1=1/2$.  

In \cite{OS1,OS2}, the annihilation formulas are obtained for positive integers $\mu$ and $k$ with $\mu\ge k$:  
$$
\sum_{j=0}^\mu(-1)^j\stf{\mu+2}{j+2}_2\mathbb B_k^{(n-j)}=0
$$
or 
$$
\sum_{j=0}^\mu(-1)^j\stf{\mu+2}{j+2}_2\mathbb B_{n+j}^{(-k)}=0\,. 
$$ 
On the other hand, similar formulas with positive index have been studied
 in \cite{KCDC,Rahmani}. Surprisingly, relations include the harmonic numbers $H_n:=\sum_{\ell=1}^n 1/\ell$ and their generalization. For example, we have 
$$
\sum_{j=1}^n\stf{n}{j}\mathbb B_j^{(k)}=\frac{n!}{(n+1)^k}\quad(n\ge 1)  
$$ 
and 
$$
\sum_{j=0}^n\stf{n+1}{j+1}\mathbb B_j^{(k)}=n! H_{n+1}^{(k)}\quad(n\ge 0)\,,  
$$ 
where $H_n^{(k)}=\sum_{i=1}^n(1/i^k)$ is the higher order harmonic number.  
Some other simpler cases are given by   
\begin{equation}
\sum_{j=0}^n(-1)^j\stf{n+1}{j+1}B_j=n! H_{n+1}\quad(n\ge 1) 
\label{eq:ce}
\end{equation} 
(\cite{CE}),  
and 
$$
\sum_{j=1}^n\stf{n}{j}B_j=-\frac{(n-1)!}{n+1}\quad(n\ge 1) 
$$  
(\cite{Sprugnoli}), 
where the Bernoulli number $B_n$ is defined by 
$$
\frac{x}{e^x-1}=\sum_{n=0}^\infty B_n\frac{x^n}{n!}
$$ 
with $B_1=-1/2$. In \cite[Corollary 10]{CD} and \cite[Theorem 5]{KC}, Bernoulli polynomials are expressed in terms of Stirling numbers and hyperharmonic numbers, which is a generalization of harmonic numbers. Thus, the formula (\ref{eq:ce}) can be obtained as a special case. Apart from that formula, other expressions can be found in \cite{Boyadzhiev}.

As the inverse function of $e^x-1$ is $\log(1+x)$, Cauchy number $c_n$ is also enough interesting in history.  
In \cite{Ko1,Ko2}, for any integer $k$, the poly-Cauchy numbers $c_n^{(k)}$ are defined by 
$$
{\rm Lif}_k\bigl(\log(1+x)\bigr)=\sum_{n=0}^\infty c_n^{(k)}\frac{x^n}{n!}\,, 
$$ 
where 
$$
{\rm Lif}_k(z)=\sum_{n=0}^\infty\frac{z^n}{n!(n+1)^k}
$$ 
is the polylogarithm factorial (polyfactorial) function.  
The poly-Cauchy numbers $c_n^{(k)}$ have an explicit expression in terms of the Stirling numbers of the first kind as 
\begin{equation}  
c_n^{(k)}=\sum_{\ell=1}^n\frac{(-1)^{n-\ell}}{(\ell+1)^k}\stf{n}{\ell}\quad(n\ge 1)\,. 
\label{pc:st1} 
\end{equation}
When $k=1$, $c_n=c_n^{(1)}$ is the classical Cauchy number (e.g., see, \cite{Comtet}), defined by 
$$
\frac{x}{\log(1+x)}=\sum_{n=0}^\infty c_n\frac{x^n}{n!}\,.  
$$ 
Note that $b_n=c_n/n!$ are call the Bernoulli numbers of the second kind (e.g., see, \cite{Howard}).  Cauchy numbers have many similar or corresponding relations to Bernoulli numbers.  

In this paper, we show that for integers $n$, $r$ and $k$ with $n\ge r\ge 1$, 
$$ 
\sum_{j=r}^n\sts{n}{j}_r c_j^{(k)}=\sum_{\ell=1}^{r}\frac{(-1)^{r-\ell}}{(n-r+\ell+1)^k}\stf{r}{\ell}
$$ 
and for integers $n$, $r$ and $k$ with $n\ge r-1\ge 0$, 
$$ 
\sum_{j=r-1}^n\sts{n+1}{j+1}_r c_j^{(k)}=\sum_{\ell=1}^{r}(-1)^{r-\ell}\stf{r}{\ell}\sum_{i=0}^{n-r+\ell}\binom{n-r+\ell}{i}\frac{1}{(i+1)^k}\,. 
$$ 
In the case of negative indices, we have for $n\ge k+2$,  
$$ 
\sum_{l=0}^k\sts{n-1}{n-l-1}_{n-k-1}c_{n-l}^{(-k)}=0\,. 
$$ 
We also give their analogous results for the poly-Cauchy numbers of the second kind $\widehat c_n^{(k)}$, defined in (\ref{def:pc2}).

\section{The $r$-Stirling numbers}  

In this section, we mention some basic properties of the $r$-Stirling numbers, which will be used in this paper.     

By using the $r$-Stirling numbers, 
the original Stirling numbers can be expressed as 
$$
\stf{n}{m}=\stf{n}{m}_0,\quad \sts{n}{m}=\stf{n}{m}_0\,,
$$
and 
$$
\stf{n}{m}=\stf{n}{m}_1,\quad \sts{n}{m}=\stf{n}{m}_1\quad(n>0)\,,
$$ 
There exist recurrence relations as 
\begin{equation}
\stf{n}{m}_r=(n-1)\stf{n-1}{m}_r+\stf{n-1}{m-1}_r\quad(n>r)
\label{rec-rel} 
\end{equation} 
with 
$$
\stf{n}{m}_r=0\quad(n<r)\quad\hbox{and}\quad \stf{n}{m}_r=\delta_{m,r}\quad(n=r)\,, 
$$  
and 
$$
\sts{n}{m}_r=m\sts{n-1}{m}_r+\sts{n-1}{m-1}_r\quad(n>r)
$$
with 
$$
\sts{n}{m}_r=0\quad(n<r)\quad\hbox{and}\quad \sts{n}{m}_r=\delta_{m,r}\quad(n=r)\,, 
$$ 
where  $\delta_{m,r}=1$ ($m=r$); $0$ ($m\ne r$). 
Note that the orthogonal properties \cite[Theorem 5, Theorem 6]{Broder} hold between two kinds of $r$-Stirling numbers too. Namely, 
\begin{equation}
\sum_{k}(-1)^{n-k}\stf{n}{k}_r\sts{k}{m}_r=\begin{cases}
\delta_{m,n}&\text{if $n\ge r$};\\ 
0&\text{otherwise}
\end{cases}
\label{or-ri}
\end{equation}
and 
\begin{equation}
\sum_{k}(-1)^{n-k}\stf{k}{n}_r\sts{m}{k}_r=\begin{cases}
\delta_{m,n}&\text{if $n\ge r$};\\ 
0&\text{otherwise}\,.
\end{cases}
\label{or-r}
\end{equation}
The ordinary generating functions of both kinds of $r$-Stirling numbers \cite[Corollary 9, Corollary 10]{Broder} are given by 
$$
\sum_{k}\stf{n}{k}_r z^k=\begin{cases}
z^r(z+r)(z+r+1)\cdots(z+n-1)&\text{if $n\ge r\ge 0$};\\ 
0&\text{otherwise}  
\end{cases}
$$
and 
$$
\sum_{k}\sts{k}{m}_r z^k=\begin{cases}
\dfrac{z^m}{(1-r z)\bigl(1-(r+1)z\bigr)\cdots\bigl(1-m z\bigr)}&\text{if $m\ge r\ge 0$};\\ 
0&\text{otherwise}\,, 
\end{cases}
$$
respectively.  

\section{Main results}  

This is our first main result.  

\begin{theorem}  
For integers $n$, $r$ and $k$ with $n\ge r\ge 1$, we have 
$$ 
\sum_{j=r}^n\sts{n}{j}_r c_j^{(k)}=\sum_{\ell=1}^{r}\frac{(-1)^{r-\ell}}{(n-r+\ell+1)^k}\stf{r}{\ell}\,. 
$$ 
\label{th1}
\end{theorem}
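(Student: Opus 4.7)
The strategy is to convert the identity into an exponential-generating-function statement and reduce it to a short chain-rule computation. Introduce the shifted sequence
$$b_{N}:=\sum_{j=0}^{N}\sts{N+r}{j+r}_{r}c_{j+r}^{(k)}\,,$$
so that the left-hand side at index $n\ge r$ equals $b_{n-r}$. The combinatorial decomposition of an $r$-Stirling second-kind partition (the elements $1,\dots,r$ sit in $r$ distinct ``special'' blocks; the remaining $N$ elements are distributed either among those $r$ labeled blocks or into $j$ unordered non-empty ``regular'' blocks) yields the egf identity
$$\sum_{N\ge 0}\sts{N+r}{j+r}_{r}\frac{x^{N}}{N!}=e^{rx}\,\frac{(e^{x}-1)^{j}}{j!}\quad(j\ge 0)\,.$$
Multiplying by $c_{j+r}^{(k)}$ and summing on $j$ then gives $\sum_{N\ge 0}b_{N}x^{N}/N!=e^{rx}A(e^{x}-1)$, where $A(y):=\sum_{j\ge 0}c_{j+r}^{(k)}y^{j}/j!$.

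Next, observe that $A(y)=D_{y}^{r}C(y)$ for $C(y):=\sum_{n\ge 0}c_{n}^{(k)}y^{n}/n!={\rm Lif}_{k}(\log(1+y))$. Under the substitution $t=\log(1+y)$ one has $D_{y}=(1+y)^{-1}D_{t}$, and a short induction on $r$ (using $D_{t}\circ(1+y)^{-r}=(1+y)^{-r}(D_{t}-r)$, which is immediate from $(1+y)^{-r}=e^{-rt}$) establishes the operator identity
$$(1+y)^{r}D_{y}^{\,r}=(D_{t})_{r}=D_{t}(D_{t}-1)\cdots(D_{t}-r+1)=\sum_{\ell=1}^{r}(-1)^{r-\ell}\stf{r}{\ell}D_{t}^{\ell}\,.$$
Applying this to ${\rm Lif}_{k}(t)$, together with the direct formula ${\rm Lif}_{k}^{(\ell)}(t)=\sum_{N\ge 0}t^{N}/(N!(N+\ell+1)^{k})$, gives
$$(1+y)^{r}A(y)=\sum_{\ell=1}^{r}(-1)^{r-\ell}\stf{r}{\ell}\sum_{N\ge 0}\frac{\log(1+y)^{N}}{N!(N+\ell+1)^{k}}\,.$$

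Finally, setting $y=e^{x}-1$ turns $1+y$ into $e^{x}$ and $\log(1+y)$ into $x$, so the factor $(1+y)^{r}$ cancels $e^{rx}$ and we obtain
$$\sum_{N\ge 0}b_{N}\frac{x^{N}}{N!}=\sum_{\ell=1}^{r}(-1)^{r-\ell}\stf{r}{\ell}\sum_{N\ge 0}\frac{x^{N}}{N!(N+\ell+1)^{k}}\,.$$
Reading off the coefficient of $x^{N}/N!$ and substituting $N=n-r$ yields exactly the claimed right-hand side. The only non-routine step is the operator identity $(1+y)^{r}D_{y}^{\,r}=(D_{t})_{r}$; it is the algebraic mechanism by which the Stirling numbers of the first kind $\stf{r}{\ell}$ enter the iterated chain rule for composition with $\log(1+y)$, and once it is in place the remainder of the argument is straightforward bookkeeping.
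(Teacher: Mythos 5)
Your argument is correct, and it takes a genuinely different route from the paper. The paper proves the identity by inverting the $r$-Stirling transform via the orthogonality relations (\ref{or-ri})--(\ref{or-r}), reducing the claim to the statement that $\sum_{\ell}(-1)^{r-\ell}\stf{r}{\ell}\sum_i\frac{(-1)^{n-i}}{(i-r+\ell+1)^k}\stf{n}{i}_r$ reproduces the explicit expansion (\ref{pc:st1}) of $c_n^{(k)}$, and then establishing the needed convolution identity $\sum_{\ell}\stf{r}{\ell}\stf{n}{r-\ell+m}_r=\stf{n}{m}$ by a combinatorial (cycle-merging) argument in Lemma \ref{lem11}. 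You instead work entirely on the generating-function side: the egf $\sum_{N}\sts{N+r}{j+r}_r x^N/N!=e^{rx}(e^x-1)^j/j!$ packages the left-hand side as $e^{rx}D_y^rC(y)\big|_{y=e^x-1}$, and the operator identity $(1+y)^rD_y^{\,r}=D_t(D_t-1)\cdots(D_t-r+1)=\sum_{\ell}(-1)^{r-\ell}\stf{r}{\ell}D_t^{\ell}$ (with $t=\log(1+y)$) is what makes the first-kind Stirling numbers appear on the right; all steps check out, including the induction via $D_t\circ e^{-rt}=e^{-rt}(D_t-r)$ and the evaluation $D_t^{\ell}{\rm Lif}_k(t)=\sum_N t^N/\bigl(N!(N+\ell+1)^k\bigr)$. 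The trade-off: the paper's route is elementary and finite (no formal power series), and its Lemma \ref{lem11} is of independent combinatorial interest; your route is shorter, explains conceptually why the falling factorial of $D_t$ governs the answer, and adapts with almost no change to Theorems \ref{th11}, \ref{th2} and \ref{th5} (replace ${\rm Lif}_k(t)$ by ${\rm Lif}_k(-t)$ or insert a factor $(1+y)^q$), though it does rely on the standard egf for the $r$-Stirling numbers of the second kind, which the paper never needs to invoke.
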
  

\noindent 
{\it Remark.}  
When $r=1,2,3,4$, we have 
\begin{align*} 
\sum_{j=1}^n\sts{n}{j}c_j^{(k)}&=\frac{1}{(n+1)^k}\quad\text{(\cite[Theorem 3]{Ko1})}\,,\\ 
\sum_{j=2}^n\sts{n}{j}_2 c_j^{(k)}&=-\frac{1}{n^k}+\frac{1}{(n+1)^k}\,,\\ 
\sum_{j=3}^n\sts{n}{j}_3 c_j^{(k)}&=\frac{2}{(n-1)^k}-\frac{3}{n^k}+\frac{1}{(n+1)^k}\,,\\ 
\sum_{j=4}^n\sts{n}{j}_4 c_j^{(k)}&=-\frac{6}{(n-2)^k}+\frac{11}{(n-1)^k}-\frac{6}{n^k}+\frac{1}{(n+1)^k}\,. 
\end{align*}
When $n=r$, by $\sts{r}{r}_r=1$, we have (\ref{pc:st1}) with $n=r$. 

One variation is given as follows.  

\begin{theorem}  
For integers $n$, $r$ and $k$ with $n\ge r-1\ge 0$, we have 
$$ 
\sum_{j=r-1}^n\sts{n+1}{j+1}_r c_j^{(k)}=\sum_{\ell=1}^{r}(-1)^{r-\ell}\stf{r}{\ell}\sum_{i=0}^{n-r+\ell}\binom{n-r+\ell}{i}\frac{1}{(i+1)^k}\,. 
$$ 
\label{th2}
\end{theorem}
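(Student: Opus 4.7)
The strategy is to lift the claimed identity to a polynomial identity in $x$ and then integrate against the factorial integral representation of $c_j^{(k)}$. Combining \eqref{pc:st1} with $\int_0^1\!\cdots\!\int_0^1(t_1\cdots t_k)^\ell\,dt_1\cdots dt_k = 1/(\ell+1)^k$ gives
$$
c_j^{(k)} = \int_0^1\!\cdots\!\int_0^1 x(x-1)\cdots(x-j+1)\,dt_1\cdots dt_k\qquad(x = t_1\cdots t_k),
$$
so that any polynomial identity expressing $\sum_j \sts{n+1}{j+1}_r\,x(x-1)\cdots(x-j+1)$ in closed form becomes, after setting $x = t_1\cdots t_k$ and integrating termwise, a corresponding closed form for $\sum_j \sts{n+1}{j+1}_r\,c_j^{(k)}$.

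The main step is to establish the horizontal generating function
$$
\sum_K \sts{N}{K}_r\, x(x-1)\cdots(x-K+1) \;=\; x^{N-r}\cdot x(x-1)\cdots(x-r+1) \qquad(N\ge r)
$$
by induction on $N$ using the recurrence $\sts{N+1}{K}_r = K\sts{N}{K}_r + \sts{N}{K-1}_r$, then to specialize it at $N = n+1$ and substitute $x\mapsto x+1$. Extracting a factor $(x+1)$ from $(x+1)(x+1-1)\cdots(x+1-K+1)$, reindexing $K=j+1$, and cancelling yields
$$
\sum_j \sts{n+1}{j+1}_r\, x(x-1)\cdots(x-j+1) \;=\; (x+1)^{n-r+1}\cdot x(x-1)\cdots(x-r+2),
$$
valid for every $n\ge r-1$. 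Rewriting the right-hand side in powers of $y = x+1$ via $y(y-1)\cdots(y-r+1) = \sum_{\ell=0}^r(-1)^{r-\ell}\stf{r}{\ell}\,y^\ell$ and using $\stf{r}{0}=0$ for $r\ge 1$ to drop the $\ell=0$ term, I obtain the cleaner form
$$
\sum_j \sts{n+1}{j+1}_r\, x(x-1)\cdots(x-j+1) \;=\; \sum_{\ell=1}^r(-1)^{r-\ell}\stf{r}{\ell}\,(x+1)^{n-r+\ell}.
$$

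Finally, setting $x = t_1\cdots t_k$, integrating over $[0,1]^k$, and expanding each $(x+1)^{n-r+\ell}$ by the binomial theorem with $\int x^i\,dt = 1/(i+1)^k$ delivers the right-hand side of the theorem. The main subtlety is the polynomial bookkeeping after the shift $x\mapsto x+1$: one must extract the factor $(x+1)$ from the shifted falling factorial cleanly so that the resulting identity features $\stf{r}{\ell}$ rather than $\stf{r-1}{\ell}$, and keeping everything in the factored form $(x+1)^{n-r+1}\,x(x-1)\cdots(x-r+2)$ keeps the identity polynomial even at the endpoint $n = r-1$, where the formal exponent $n-r$ would be $-1$.
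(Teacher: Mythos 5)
Your proof is correct, but it takes a genuinely different route from the paper's. The paper proves Theorem \ref{th2} ``backwards'': it inverts the transform via the orthogonality of the $r$-Stirling numbers, reducing the claim to the identity (\ref{eq:303}) expressing $\stf{n}{i}$ as a double sum over $r$-Stirling numbers of the first kind, which it then verifies by showing both sides satisfy the recurrence (\ref{rec-rel}) with the same initial values. You instead argue ``forwards'': your key lemma is the horizontal generating-function identity $\sum_{K}\sts{N}{K}_r\,x(x-1)\cdots(x-K+1)=x^{N-r}\,x(x-1)\cdots(x-r+1)$ for $N\ge r$ (correct, and proved by the same kind of induction on the recurrence $\sts{N+1}{K}_r=K\sts{N}{K}_r+\sts{N}{K-1}_r$), which after the shift $x\mapsto x+1$, cancellation of the factor $x+1$, and expansion via $(y)(y-1)\cdots(y-r+1)=\sum_{\ell}(-1)^{r-\ell}\stf{r}{\ell}y^{\ell}$ gives the genuinely polynomial identity $\sum_{j}\sts{n+1}{j+1}_r\,x(x-1)\cdots(x-j+1)=\sum_{\ell=1}^{r}(-1)^{r-\ell}\stf{r}{\ell}(x+1)^{n-r+\ell}$; applying the $k$-fold integral then yields the theorem. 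Your handling of the endpoint $n=r-1$ and of the factor extraction (so that $\stf{r}{\ell}$ rather than $\stf{r-1}{\ell}$ appears) is exactly the right bookkeeping. The one caveat is that the integral representation of $c_j^{(k)}$ literally requires $k\ge 1$, whereas the theorem is stated for all integers $k$; this is harmless, since your polynomial identity combined with the linear functional $x^m\mapsto (m+1)^{-k}$ (i.e., comparing coefficients and invoking (\ref{pc:st1}) directly) gives the general case without any integral. Your approach buys directness --- no appeal to the inverse transform --- and the same polynomial identity specializes immediately to the polynomial version (Theorem \ref{th5}, by tracking a parameter $q$ in place of the shift by $1$) and to the second-kind analogue (Theorem \ref{th22}, by an appropriate sign change), whereas the paper's recurrence argument is tailored to matching against (\ref{rec-rel}) and is repeated separately for each variant.
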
  

\noindent 
{\it Remark.}  
When $r=1$, we have 
$$
\sum_{j=0}^n\sts{n+1}{j+1}c_j^{(k)}=\sum_{i=0}^{n}\binom{n}{i}\frac{1}{(i+1)^k}\,. 
$$
When $k=1$ and $r=1,2,3$, we have 
\begin{align*} 
\sum_{j=0}^n\sts{n+1}{j+1}c_j&=\frac{2^{n+1}-1}{n+1}\,,\\ 
\sum_{j=0}^n\sts{n+1}{j+1}_2 c_j&=-\frac{2^{n}-1}{n}+\frac{2^{n+1}-1}{n+1}\,,\\ 
\sum_{j=0}^n\sts{n+1}{j+1}_3 c_j&=\frac{2(2^{n-1}-1)}{n-1}-\frac{3(2^{n}-1)}{n}+\frac{2^{n+1}-1}{n+1}\,. 
\end{align*}
When $n=r-1$, we have 
\begin{align*}  
c_{r-1}^{(k)}&=\sum_{\ell=0}^{r-1}(-1)^{r-\ell}\stf{r}{\ell}\sum_{i=0}^{\ell-1}\binom{\ell-1}{i}\frac{1}{(i+1)^k}\\
&=\sum_{i=0}^{r-1}\frac{1}{(i+1)^k}\sum_{\ell=i+1}^r(-1)^{r-\ell}\stf{r}{\ell}\binom{\ell-1}{i}\\
&=\sum_{i=0}^{r-1}\frac{(-1)^{r-i-1}}{(i+1)^k}\stf{r-1}{i}\,, 
\end{align*}  
yielding the formula in (\ref{pc:st1}) with $n=r-1$.

Poly-Cauchy numbers of the second kind $\widehat c_n^{(k)}$ (\cite{Ko1}) are defined by  
\begin{equation}
{\rm Lif}_k\bigl(-\log(1+x)\bigr)=\sum_{n=0}^\infty\widehat c_n^{(k)}\frac{x^n}{n!}\,.
\label{def:pc2}
\end{equation}  
When $k=1$, $\widehat c_n=\widehat c_n^{(1)}$ are the classical Cauchy numbers of the second kind (see, e.g., \cite{Comtet}), defined by 
$$
\frac{x}{(1+x)\log(1+x)}=\sum_{n=0}^\infty\widehat c_n^{(k)}\frac{x^n}{n!}\,.
$$ 
Concerning the poly-Cauchy numbers of the second kind $\widehat c_n^{(k)}$, we have the following corresponding results.  

\begin{theorem}  
For integers $n$, $r$ and $k$ with $n\ge r\ge 1$, we have 
$$ 
\sum_{j=r}^n\sts{n}{j}_r\widehat c_j^{(k)}=\sum_{\ell=1}^{r}\frac{(-1)^{n}}{(n-r+\ell+1)^k}\stf{r}{\ell}\,. 
$$ 
\label{th11}
\end{theorem}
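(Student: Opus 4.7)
The plan is to follow exactly the pattern that underlies Theorem~\ref{th1}, with the only change being a single sign that propagates through the calculation. First I would record the second-kind analog of the explicit expansion \eqref{pc:st1}. Substituting
$$\frac{\bigl(-\log(1+x)\bigr)^m}{m!}=\sum_{j\ge m}(-1)^j\stf{j}{m}\frac{x^j}{j!}$$
(obtained from the standard generating function of $\stf{j}{m}$ by flipping the sign of $x$) into the defining series \eqref{def:pc2} and reading off coefficients gives
$$\widehat c_j^{(k)}=\sum_{m=1}^{j}\frac{(-1)^j}{(m+1)^k}\stf{j}{m}\qquad(j\ge 1),$$
which differs from \eqref{pc:st1} only in that $(-1)^{j-m}$ is replaced by $(-1)^j$.

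Inserting this into $\sum_{j=r}^n\sts{n}{j}_r\widehat c_j^{(k)}$ and swapping the two summations reduces Theorem~\ref{th11} to the inner combinatorial identity
$$\sum_{j=r}^n(-1)^{n-j}\sts{n}{j}_r\stf{j}{m}=\begin{cases}\stf{r}{m-n+r} & \text{if }n-r+1\le m\le n,\\ 0 & \text{otherwise},\end{cases}$$
which is precisely the engine behind Theorem~\ref{th1}. The cleanest derivation is to compare coefficients of $x^m$ in the polynomial identity
$$\sum_{j=r}^n(-1)^{n-j}\sts{n}{j}_r(x)^{(j)}=(x)^{(r)}\,x^{n-r},$$
which in turn follows from the basic $r$-Stirling expansion $(x+r)^{n-r}=\sum_{j=r}^n\sts{n}{j}_r x^{\underline{j-r}}$ (read off from $e^{rz}(e^z-1)^m/m!=\sum_n\sts{n+r}{m+r}_r z^n/n!$), combined with the elementary identities $(x)^{(j)}=(x)^{(r)}(x+r)^{(j-r)}$ and $y^{\underline{p}}=(-1)^p(-y)^{(p)}$.

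Multiplying this inner identity by $(-1)^n$ converts the factor $(-1)^{n-j}$ into $(-1)^j$, so the inner sum equals $(-1)^n\stf{r}{m-n+r}$ precisely on $n-r+1\le m\le n$. Reindexing via $\ell=m-n+r\in\{1,\ldots,r\}$ then yields
$$\sum_{j=r}^n\sts{n}{j}_r\widehat c_j^{(k)}=\sum_{\ell=1}^{r}\frac{(-1)^n}{(n-r+\ell+1)^k}\stf{r}{\ell},$$
as required. The main obstacle is the polynomial identity above; once that is in hand, the only novelty compared with Theorem~\ref{th1} is tracking how the single sign change $(-1)^{j-m}\mapsto(-1)^j$ in the explicit formula transforms the final coefficient $(-1)^{r-\ell}$ of Theorem~\ref{th1} into $(-1)^n$ here, since the sum on $m$ collapses at $m=n-r+\ell$.
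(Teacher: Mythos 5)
Your proof is correct, but it takes a genuinely different route from the one the paper intends. The paper omits the proof of Theorem \ref{th11} as being ``similar to that of Theorem \ref{th1}'', whose argument runs through the inverse transform: by the orthogonality relations (\ref{or-ri})--(\ref{or-r}) the claim is converted into an expansion of $\widehat c_n^{(k)}$ itself in terms of the $r$-Stirling numbers of the \emph{first} kind, and the engine is then Lemma \ref{lem11}, the combinatorially proved two-case convolution $\sum_{\ell}\stf{r}{\ell}\stf{n}{r-\ell+m}_r=\stf{n}{m}$, combined with the explicit Stirling expansion of the poly-Cauchy numbers. You instead work in the forward direction: you substitute the explicit formula $\widehat c_j^{(k)}=(-1)^j\sum_{m}\stf{j}{m}(m+1)^{-k}$ (correct, and consistent with the $\alpha=1$ case of the shifted formula quoted in Section 6), interchange summations, and evaluate the mixed convolution $\sum_{j}(-1)^{n-j}\sts{n}{j}_r\stf{j}{m}$ by comparing coefficients in the polynomial identity $\sum_{j=r}^{n}(-1)^{n-j}\sts{n}{j}_r(x)^{(j)}=(x)^{(r)}x^{n-r}$, which you correctly reduce, via $(x)^{(j)}=(x)^{(r)}(x+r)^{(j-r)}$ and $y^{\underline{p}}=(-1)^p(-y)^{(p)}$, to Broder's expansion $(x+r)^{n-r}=\sum_{j}\sts{n}{j}_r x^{\underline{j-r}}$. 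I verified the inner identity and the sign bookkeeping (e.g.\ for $n=3$, $r=2$ the left side is $-2\stf{2}{m}+\stf{3}{m}$, giving $0,1,1$ for $m=1,2,3$, as required), and the final reindexing $\ell=m-n+r$ produces exactly the stated right-hand side. Your approach buys a uniform and arguably cleaner treatment: the same inner identity also yields Theorem \ref{th1} directly (the factor $(-1)^{j-m}$ there turns the final coefficient into $(-1)^{r-\ell}$), with no orthogonality inversion and no two-range case analysis as in Lemma \ref{lem11}; the cost is importing the falling-factorial expansion of the $r$-Stirling numbers of the second kind, which the paper does not state but which is standard in \cite{Broder}. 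The paper's route, by contrast, keeps the $r$-Stirling transform/inverse pair --- the stated theme of the paper --- in the foreground.
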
 

\begin{theorem}  
For integers $n$, $r$ and $k$ with $n\ge r-1\ge 0$, we have 
$$ 
\sum_{j=r-1}^n\sts{n+1}{j+1}_r\widehat c_j^{(k)}=\sum_{\ell=1}^{r}(-1)^{r-\ell}\stf{r}{\ell}\sum_{i=0}^{n-r+\ell}\binom{n-r+\ell}{i}\frac{(-1)^i}{(i+1)^k}\,.
$$ 
\label{th22}
\end{theorem}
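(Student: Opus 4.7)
The plan is to mirror the proof of Theorem~\ref{th2}, since the second-kind statement differs from the first-kind one only by a controlled sign. First I would derive the explicit expansion
$$
\widehat c_j^{(k)}=\sum_{m=0}^j\frac{(-1)^j}{(m+1)^k}\stf{j}{m},
$$
the second-kind analogue of (\ref{pc:st1}). This follows from the definition (\ref{def:pc2}) by expanding ${\rm Lif}_k(-\log(1+x))$ in powers of $-\log(1+x)$ and invoking the generating function $\sum_{n\ge m}\stf{n}{m}x^n/n!=(-\log(1-x))^m/m!$ after the substitution $x\mapsto -x$. Compared with (\ref{pc:st1}) for $c_j^{(k)}$, the sign $(-1)^{j-m}$ is replaced by $(-1)^j$; equivalently, an extra $(-1)^m$ is pulled into the $m$-sum.

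Substituting this expression into the left-hand side and swapping the finite double sum (justified because $\stf{j}{m}=0$ for $m>j$) gives
$$
\sum_{j=r-1}^n\sts{n+1}{j+1}_r\widehat c_j^{(k)}=\sum_{m\ge 0}\frac{(-1)^m}{(m+1)^k}\sum_{j}(-1)^{j-m}\sts{n+1}{j+1}_r\stf{j}{m}.
$$
The inner sum in $j$ is the Stirling identity
$$
\sum_{j}(-1)^{j-m}\sts{n+1}{j+1}_r\stf{j}{m}=\sum_{\ell=1}^{r}(-1)^{r-\ell}\stf{r}{\ell}\binom{n-r+\ell}{m},
$$
which is exactly the combinatorial kernel of Theorem~\ref{th2}: substituting (\ref{pc:st1}) into the left-hand side of Theorem~\ref{th2} and matching the coefficient of $1/(m+1)^k$ on both sides as $k$ varies yields precisely this identity. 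Inserting it into the previous display, then interchanging the $m$- and $\ell$-sums and relabelling $m\to i$, produces the right-hand side of Theorem~\ref{th22}.

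The main obstacle is therefore the Stirling identity just displayed; once it is granted, the passage from Theorem~\ref{th2} to Theorem~\ref{th22} reduces to tracking a single sign. For a self-contained proof of that identity I would induct on $r$, with the base case $r=1$ reducing to the standard relation $\sum_{j}(-1)^{j-m}\sts{n+1}{j+1}\stf{j}{m}=\binom{n}{m}$ (immediate from the Stirling inversion (\ref{eq:trans1})--(\ref{eq:trans2}) or from the Broder orthogonalities (\ref{or-ri})--(\ref{or-r}) at $r=1$), and the inductive step using the second-kind recurrence $\sts{n}{m}_r=m\sts{n-1}{m}_r+\sts{n-1}{m-1}_r$ together with the first-kind recurrence (\ref{rec-rel}).
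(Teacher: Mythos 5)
Your argument is correct and rests on the same $k$-free $r$-Stirling identity that underlies the paper's proof of Theorem~\ref{th2} (the paper omits the proof of Theorem~\ref{th22}, stating only that it is similar), and your sign-tracking via the expansion $\widehat c_j^{(k)}=(-1)^j\sum_m\stf{j}{m}(m+1)^{-k}$ correctly produces the extra $(-1)^i$. The only presentational difference is that you substitute forward and extract the kernel identity from Theorem~\ref{th2} by letting $k$ vary (a valid step, since the functions $k\mapsto(m+1)^{-k}$ for distinct $m$ are linearly independent), whereas the paper inverts via the shifted orthogonality relations and establishes the equivalent identity (\ref{eq:303}) by the recurrence (\ref{rec-rel}).
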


\section{Proof of the main results}  

We need the following relations in order to prove Theorem \ref{th1}. 

\begin{Lem}  
Let $r$ be any integer with $1\le r\le n$. Then for $1\le m\le n-r+1$,  
$$ 
\sum_{\ell=1}^m\stf{r}{\ell}\stf{n}{r-\ell+m}_r=\stf{n}{m}\,,
$$
and for $n-r+2\le m\le n$,  
$$ 
\sum_{\ell=1}^{n+1-\max\{m,r\}}\stf{r}{m-n+r-1+\ell}\stf{n}{n-\ell+1}_r=\stf{n}{m}\,. 
$$ 
\label{lem11}
\end{Lem}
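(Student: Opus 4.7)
The plan is to deduce both identities from a single convolution identity obtained by factoring the rising factorial. Writing
\[
 z(z+1)\cdots(z+n-1) = \bigl[z(z+1)\cdots(z+r-1)\bigr]\cdot\bigl[(z+r)(z+r+1)\cdots(z+n-1)\bigr],
\]
the left-hand side has ordinary generating function $\sum_{m}\stf{n}{m}z^m$, the first bracket on the right equals $\sum_{\ell}\stf{r}{\ell}z^\ell$, and the second, using the ogf of the $r$-Stirling numbers of the first kind recalled in Section~2, equals $z^{-r}\sum_{k}\stf{n}{k}_r z^k=\sum_{k}\stf{n}{k}_r z^{k-r}$.

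Multiplying and extracting the coefficient of $z^m$ on both sides, with $k=r-\ell+m$, yields the master identity
\[
 \stf{n}{m}=\sum_{\ell}\stf{r}{\ell}\stf{n}{r-\ell+m}_r.
\]
The vanishing conventions (namely $\stf{r}{\ell}=0$ unless $1\le\ell\le r$ when $r\ge 1$, and $\stf{n}{k}_r=0$ unless $r\le k\le n$) pin the effective summation range down to $\max(1,r+m-n)\le\ell\le\min(r,m)$. When $1\le m\le n-r+1$ the lower bound degenerates to $1$, and the upper limit $\min(r,m)$ may be safely inflated to $m$ since $\stf{r}{\ell}=0$ for $\ell>r$; this is precisely the first claimed identity.

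When $n-r+2\le m\le n$ the lower bound becomes $r+m-n\ge 2$, so I re-index by the substitution $\ell\mapsto m-n+r-1+\ell$. Under this shift the range $r+m-n\le\cdot\le\min(r,m)$ is sent to $1\le\ell\le \min(r,m)-(r+m-n)+1=n+1-\max(m,r)$, the factor $\stf{r}{\ell}$ becomes $\stf{r}{m-n+r-1+\ell}$, and $\stf{n}{r-\ell+m}_r$ becomes $\stf{n}{n-\ell+1}_r$, matching the second identity exactly. The only subtle step is the bookkeeping that separates the two ranges and carries out the index shift in the large-$m$ case; once the generating-function factorization is in place, both formulas follow by a routine coefficient comparison, so I anticipate no deeper obstacle.
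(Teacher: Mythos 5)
Your proof is correct, and it takes a genuinely different route from the paper. The paper argues combinatorially: a permutation of $\{1,\dots,n\}$ with $m$ cycles is obtained by first forming a permutation with $r-\ell+m$ cycles in which $1,\dots,r$ lie in distinct cycles (counted by $\stf{n}{r-\ell+m}_r$) and then amalgamating the $r$ distinguished cycles into $\ell$ cycles (counted by $\stf{r}{\ell}$), summing over $\ell$; the second identity is the same count after an index shift, and the paper illustrates this with the worked example $\stf{6}{3}$. You instead factor the rising factorial $z(z+1)\cdots(z+n-1)$ as $\bigl[z\cdots(z+r-1)\bigr]\cdot\bigl[(z+r)\cdots(z+n-1)\bigr]$, identify the two factors with the ordinary generating functions of $\stf{r}{\ell}$ and of $\stf{n}{k}_r$ (the latter quoted in Section~2), and read off the single master convolution $\stf{n}{m}=\sum_{\ell}\stf{r}{\ell}\stf{n}{r-\ell+m}_r$ by comparing coefficients; the two stated identities then come from tracking the support $\max(1,r+m-n)\le\ell\le\min(r,m)$ and re-indexing in the large-$m$ case. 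Your range bookkeeping checks out (in particular the upper limit $\min(r,m)-(r+m-n)+1=n+1-\max\{m,r\}$ in both subcases), and the generating-function route makes that bookkeeping mechanical and arguably more airtight than the paper's sketched bijection, at the cost of losing the combinatorial interpretation of why the product of the two Stirling numbers counts something.
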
  

\noindent 
{\it Remark.} 
There are some different ways to express the Stirling numbers of the first kind in terms of the $r$-Stirling numbers of the first kind by choosing different $r$. For example,  
\begin{align*}
\stf{6}{3}=225&=\stf{2}{1}\stf{6}{4}_2+\stf{2}{2}\stf{6}{3}_2\\
&=1\cdot 71+1\cdot 154\\
&=\stf{3}{1}\stf{6}{5}_3+\stf{3}{2}\stf{6}{4}_3+\stf{3}{3}\stf{6}{3}_3\\
&=2\cdot 12+3\cdot 47+1\cdot 60\\
&=\stf{4}{1}\stf{6}{6}_4+\stf{4}{2}\stf{6}{5}_4+\stf{4}{3}\stf{6}{4}_4\\
&=6\cdot 1+11\cdot 9+6\cdot 20\,. 
\end{align*}
\begin{proof}
According to the definition of the $r$-Stirling numbers of the first kind, the relation 
$$
\stf{6}{3}=\stf{4}{1}\stf{6}{6}_4+\stf{4}{2}\stf{6}{5}_4+\stf{4}{3}\stf{6}{4}_4
$$ 
can be explained as follows.  The permutations of the set $\{1,2,3,4,5,6\}$ with $3$ cycles has three different ways, by fixing the first numbers $1,2,3,4$.  
\begin{enumerate} 
\item[(1)] $6$ numbers are divided into 6 different cycles (so that $1,2,3,4$ are in distinct cycles). Then, the cycles including $1,2,3,4$ are collected into the same group (permutation). 
$$
(1)(2)(3)(4)(\ldots)(\ldots)\Longrightarrow \bigl((1)(2)(3)(4)\bigr)\bigl(\ldots\bigr)\bigl(\ldots\bigr)
$$ 
\item[(2)] $6$ numbers are divided into 5 different cycles so that $1,2,3,4$ are in distinct cycles. Then, 4 cycles including $1,2,3,4$ are collected into 2 different groups (permutation). 
\begin{align*}
(1\ldots)(2\ldots)(3\ldots)(4\ldots)(\ldots)&\Longrightarrow \bigl((a\ldots)(b\ldots)\bigr)\bigl((c\ldots)(d\ldots)\bigr)\bigl(\ldots\bigr)\\ 
\text{or}&\Longrightarrow \bigl((a\ldots)(b\ldots)(c\ldots)\bigr)\bigl(d\ldots\bigr)\bigl(\ldots\bigr)\,, 
\end{align*}
where $\{a,b,c,d\}=\{1,2,3,4\}$
\item[(3)] $6$ numbers are divided into 4 different cycles so that $1,2,3,4$ are in distinct cycles. Then, 4 cycles including $1,2,3,4$ are collected into 3 different groups (permutation). 
$$
(1\ldots)(2\ldots)(3\ldots)(4\ldots)\Longrightarrow \bigl((a\ldots)(b\ldots)\bigr)\bigl(c\ldots\bigr)\bigl(d\ldots\bigr)
$$ 
where $\{a,b,c,d\}=\{1,2,3,4\}$
\end{enumerate}
In general, the permutations of the set $\{1,2,\dots,n\}$ with $m$ cycles are done by the following way. When $1\le m\le n-r+1$, $n$ numbers are divided into $r-\ell+m$ different cycles so that $1,2,\dots,r$ are in distinct cycles. Then, the $r$ cycles including $1,2,\dots,r$ are collected into $\ell$ different groups. Here, $1\le \ell\le m$.  
\begin{multline*}
(1\ldots)(2\ldots)\cdots(r\ldots)\underbrace{(\ldots)\cdots(\ldots)}_{m-\ell}\\ 
\Longrightarrow 
\underbrace{\bigl((a_1\ldots)\ldots\bigr)\cdots\bigl(\ldots(a_r\ldots)\bigr)}_{\ell}\underbrace{\bigl(\ldots\bigr)\cdots\bigl(\ldots\bigr)}_{m-\ell}\,,
\end{multline*}  
where $\{a_1,\dots,a_r\}=\{1,\dots,r\}$. 
When $n-r+2\le m\le n$, by putting $m'=m-n+r-1$, $n$ numbers are divided into $n-\ell+m=r+(m-m'-\ell)$ different cycles so that $1,2,\dots,r$ are in distinct cycles. Then, the $r$ cycles including $1,2,\dots,r$ are collected into $m'+\ell$ different groups. Here, $1\le \ell\le n+1-\max\{m,r\}$.  
\begin{multline*}
(1\ldots)(2\ldots)\cdots(r\ldots)\underbrace{(\ldots)\cdots(\ldots)}_{m-m'-\ell}\\ 
\Longrightarrow 
\underbrace{\bigl((a_1\ldots)\ldots\bigr)\cdots\bigl(\ldots(a_r\ldots)\bigr)}_{m'+\ell}\underbrace{\bigl(\ldots\bigr)\cdots\bigl(\ldots\bigr)}_{m-m'-\ell}\,,
\end{multline*}  
where $\{a_1,\dots,a_r\}=\{1,\dots,r\}$. 
\end{proof}

Now, it is ready to prove Theorem \ref{th1}.  

\begin{proof}[Proof of Theorem \ref{th1}]  
By the transformation of the $r$-Stirling numbers in (\ref{orth-ri}) with (\ref{orth-r}), equivalently, we shall prove that 
\begin{align}
c_n^{(k)}&=\sum_{i=r}^n(-1)^{n-i}\stf{n}{i}_r\sum_{\ell=1}^r\frac{(-1)^{r-\ell}}{(i-r+\ell+1)^k}\stf{r}{\ell}\notag\\ 
&=\sum_{\ell=1}^r(-1)^{r-\ell}\stf{r}{\ell}\sum_{i=r}^n\frac{(-1)^{n-i}}{(i-r+\ell+1)^k}\stf{n}{i}_r\quad(n\ge r)\,.
\label{eq111} 
\end{align}
By Lemma \ref{lem11}, setting $m=i-r+\ell$, the right-hand side of (\ref{eq111}) is equal to 
\begin{align*}
&\sum_{m=1}^{n-r+1}\frac{(-1)^{n-m}}{(m+1)^k}\sum_{\ell=1}^m\stf{r}{\ell}\stf{n}{r-\ell+m}_r\\
&\qquad +\sum_{m=n-r+2}^{n}\frac{(-1)^{n-m}}{(m+1)^k}\sum_{\ell=1}^{n+1-\max\{m,r\}}\stf{r}{m-n+r-1+\ell}\stf{n}{n-\ell+1}_r\\
&=\sum_{m=1}^n\frac{(-1)^{n-m}}{(m+1)^k}\stf{n}{m}=c_n^{(k)}\quad\text{(\ref{pc:st1})}\,. 
\end{align*}
\end{proof}

\begin{proof}[Proof of Theorem \ref{th2}] 
As similar orthogonal relations to (\ref{or-ri}) and (\ref{or-r}), we have 
$$ 
\sum_{k}(-1)^{n-k}\stf{n+1}{k+1}_r\sts{k+1}{m+1}_r=\begin{cases}
\delta_{m,n}&\text{if $n\ge r-1$};\\ 
0&\text{otherwise}
\end{cases}
$$ 
and 
$$ 
\sum_{k}(-1)^{n-k}\stf{k+1}{n+1}_r\sts{m+1}{k+1}_r=\begin{cases}
\delta_{m,n}&\text{if $n\ge r-1$};\\ 
0&\text{otherwise}\,. 
\end{cases}
$$ 
Thus, equivalently, we shall prove that 
\begin{align*}  
c_n^{(k)}&=\sum_{j=r-1}^n(-1)^{n-j}\stf{n+1}{j+1}_{r}\sum_{\ell=1}^r(-1)^{r-\ell}\stf{r}{\ell}\sum_{i=0}^{j-r+\ell}\binom{j-r+\ell}{i}\frac{1}{(i+1)^k}\\
&=\sum_{\ell=1}^r\stf{r}{\ell}\sum_{m=\ell-1}^{n+\ell-r}(-1)^{n-m}\stf{n+1}{m-\ell+r+1}_r\sum_{i=0}^m\binom{m}{i}\frac{1}{(i+1)^m}\,.
\end{align*}  
Because of the expression in (\ref{pc:st1}), it is sufficient to prove that for $n\ge r$
\begin{equation} 
\stf{n}{i}=\sum_{\ell=1}^r\stf{r}{\ell}\sum_{j=1}^{n-r+2}(-1)^{\ell+j-i}\binom{\ell+j-2}{i}\stf{n+1}{r+j-1}_r\,. 
\label{eq:303}
\end{equation}
Put for $0\le i\le n$ and $n\ge 1$ 
$$
a_{n,i}:=\sum_{\ell=1}^r\stf{r}{\ell}\sum_{j=1}^{n-r+2}(-1)^{\ell+j-i}\binom{\ell+j-2}{i}\stf{n+1}{r+j-1}_r\,.
$$ 
First, for $i=0$ and $r\ge 2$, by 
$$
\sum_{\ell=1}^r(-1)^{\ell}\stf{r}{\ell}=0\,, 
$$  
we have 
\begin{align*}
a_{n,0}&=\sum_{\ell=1}^r\stf{r}{\ell}\sum_{j=1}^{n-r+2}(-1)^{\ell+j}\binom{\ell+j-2}{0}\stf{n+1}{r+j-1}_r\\
&=\sum_{j=1}^{n-r+2}(-1)^{j}\stf{n+1}{r+j-1}_r\sum_{\ell=1}^r(-1)^{\ell}\stf{r}{\ell}\\
&=0\,. 
\end{align*} 
For $i=0$ and $r=1$,  
$$ 
a_{n,0}=\stf{1}{1}\sum_{j=1}^{n+1}(-1)^{j+1}\stf{n+1}{j}=0\quad(n\ge 1)\,. 
$$ 
Next, let $i\ge 1$. For convenience, put  
$$ 
a_{n,i}=\sum_{\ell=1}^r\stf{r}{\ell}b_{n,i,\ell}
$$ 
with for fixed $\ell$ 
$$
b_{n,i}=b_{n,i,\ell}:=\sum_{j=1}^{n-r+2}(-1)^{\ell+j-i}\binom{\ell+j-2}{i}\stf{n+1}{r+j-1}_r\,.
$$ 
Then, using the recurrence relation (\ref{rec-rel}), 
by 
$$
\stf{n}{r+j-1}_r=0\quad(j=0,\,j=n-r+2)  
$$ 
and 
$$
\binom{\ell+j-1}{i}=\binom{\ell+j-2}{i}+\binom{\ell+j-2}{i-1}\,, 
$$ 
we have 
\begin{align*}
b_{n,i}&=n\sum_{j=1}^{n-r+2}(-1)^{\ell+j-i}\binom{\ell+j-2}{i}\stf{n}{r+j-1}_r\\
&\quad +\sum_{j=1}^{n-r+2}(-1)^{\ell+j-i}\binom{\ell+j-2}{i}\stf{n}{r+j-2}_r\\
&=n\sum_{j=1}^{n-r+1}(-1)^{\ell+j-i}\binom{\ell+j-2}{i}\stf{n}{r+j-1}_r\\
&\quad -\sum_{j=0}^{n-r+1}(-1)^{\ell+j-i}\binom{\ell+j-1}{i}\stf{n}{r+j-1}_r\\
&=(n-1)\sum_{j=1}^{n-r+1}(-1)^{\ell+j-i}\binom{\ell+j-2}{i}\stf{n}{r+j-1}_r\\
&\quad -\sum_{j=0}^{n-r+1}(-1)^{\ell+j-i}\binom{\ell+j-2}{i-1}\stf{n}{r+j-1}_r\\
&=(n-1)b_{n-1,i}+b_{n-1,i-1}\,. 
\end{align*}
Hence, $a_{n,i}=(n-1)a_{n-1,i}+a_{n-1,i-1}$\quad($i\ge 1$).  
Since $a_{n,i}$ and $\stf{n}{i}$ satisfy the same recurrence relation as (\ref{rec-rel}) with $a_{n,0}=\stf{n}{0}$\quad($n\ge 1$), the relation (\ref{eq:303}) holds.  
\end{proof}

The proofs of Theorem \ref{th11} and Theorem \ref{th22} are similar to those of Theorem \ref{th1} and Theorem \ref{th2}, respectively, and omitted.

\section{Poly-Cauchy polynomials}  

Poly-Cauchy polynomials $c_n^{(k)}(z)$ are defined by 
\begin{equation}  
(1+x)^z{\rm Lif}_k\bigl(\log(1+x)\bigr)=\sum_{n=0}^\infty c_n^{(k)}(z)\frac{x^n}{n!}
\label{def:pcp}
\end{equation}
(\cite[Theorem 2]{Kamano}),  
and an explicit expression is given by 
\begin{equation}  
c_n^{(k)}(z)=\sum_{m=0}^n\stf{n}{m}(-1)^{n-m}\sum_{i=0}^m\binom{m}{i}\frac{z^{m-i}}{(i+1)^k}\,. 
\label{exp:pcp}
\end{equation}
(\cite[Theorem 1]{Kamano}).  When $z=0$, $c_n^{(k)}=c_n^{(k)}(0)$ are the poly-Cauchy numbers. The definition in \cite{Ko2} is an alternative way, simply by replacing $z$ by $-z$. As an extension of Theorem \ref{th1} and Theorem \ref{th2}, we have the following.  

\begin{theorem}  
For integers $n$, $r$ and $k$ with $n\ge r\ge 1$ and a real number $q$, we have 
$$ 
\sum_{j=r}^n\sts{n}{j}_r c_j^{(k)}(q)=\sum_{\ell=1}^{r}(-1)^{r-\ell}\stf{r}{\ell}\sum_{i=0}^{n-r+\ell}\binom{n-r+\ell}{i}\frac{q^{n-i}}{(i+1)^k}\,. 
$$ 
\label{th5}
\end{theorem}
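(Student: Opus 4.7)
\noindent
\textit{Proof proposal.}
The plan is to mimic the proof of Theorem \ref{th1} and reduce the theorem, via the $r$-Stirling inversion, to the explicit expression (\ref{exp:pcp}) for $c_n^{(k)}(q)$. First I would invoke the inversion formula (\ref{orth-r}) to recast the statement in its equivalent form, namely
$$
c_n^{(k)}(q)=\sum_{i=r}^n(-1)^{n-i}\stf{n}{i}_r\sum_{\ell=1}^{r}(-1)^{r-\ell}\stf{r}{\ell}\sum_{s=0}^{i-r+\ell}\binom{i-r+\ell}{s}\frac{q^{i-s}}{(s+1)^k}\quad(n\ge r),
$$
obtained by replacing $n$ with $i$ on the right-hand side of the stated identity and applying the orthogonality (\ref{or-ri})--(\ref{or-r}) exactly as in the step from Theorem \ref{th1} to (\ref{eq111}).

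Next I would change variables by setting $m=i-r+\ell$ in the triple sum and reverse the order of summation so that $m$ becomes the outermost index. The signs combine as $(-1)^{n-i}(-1)^{r-\ell}=(-1)^{n-m}$, the binomial and $(s+1)^k$ factors depend only on $m$ and $s$, and the Stirling weights regroup into an inner sum of the form $\sum_{\ell}\stf{r}{\ell}\stf{n}{m+r-\ell}_r$. By Lemma \ref{lem11} (in both of its case-split forms, which together cover $0\le m\le n$), this inner sum collapses to $\stf{n}{m}$, and the right-hand side reduces to
$$
\sum_{m=0}^n(-1)^{n-m}\stf{n}{m}\sum_{s=0}^m\binom{m}{s}\frac{q^{m-s}}{(s+1)^k},
$$
which is exactly the explicit formula (\ref{exp:pcp}). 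This closes the reduction.

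The main technical hurdle I anticipate is the careful bookkeeping of the power of $q$ under the substitution $m=i-r+\ell$: one must verify that, after the substitution, the surviving exponent of $q$ depends only on $m$ and $s$ (so that the $\ell$-sum factors cleanly into the product of Stirling numbers required by Lemma \ref{lem11}) and that the summation bounds on $s$ match the original bounds on $i$ in each case of the range split. Apart from this accounting, the argument is a direct polynomial extension of the proof of Theorem \ref{th1}, using exactly the same orthogonality and combinatorial identity, so no new ingredients beyond (\ref{or-ri}), (\ref{exp:pcp}), and Lemma \ref{lem11} are required.
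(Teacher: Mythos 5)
Your reduction strategy (invert via (\ref{orth-r}), substitute $m=i-r+\ell$, collapse the $\ell$-sum with Lemma \ref{lem11}, and compare with (\ref{exp:pcp})) is genuinely different from the paper's, which instead runs Theorem \ref{th5} through the shifted orthogonality used for Theorem \ref{th2} and reduces everything to the identity (\ref{eq:303}). However, your proof does not close, and the failure occurs at precisely the step you flagged as the ``main technical hurdle.'' After setting $m=i-r+\ell$, the exponent of $q$ in your triple sum is $i-s=m+r-\ell-s$, which still depends on $\ell$. Consequently the inner sum over $s$ does not factor out of the $\ell$-sum, the $\ell$-sum does not regroup into $\sum_{\ell}\stf{r}{\ell}\stf{n}{m+r-\ell}_r$ times an $\ell$-free quantity, and Lemma \ref{lem11} cannot be invoked. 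Matching against (\ref{exp:pcp}), whose inner term carries $q^{m-s}$, would force $i-s=m-s$, i.e.\ $\ell=r$, which holds for only one summand.

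This is not a bookkeeping issue that more care would repair: with the exponent $q^{n-i}$ as printed, the identity itself is false for $r\ge 2$. For $n=r=2$ and $k=1$ the left-hand side is $c_2(q)=q^2-\tfrac{1}{6}$, while the right-hand side evaluates to $-\bigl(q^2+\tfrac{q}{2}\bigr)+\bigl(q^2+q+\tfrac{1}{3}\bigr)=\tfrac{q}{2}+\tfrac{1}{3}$. The exponent should be $q^{n-r+\ell-i}$, matching the upper limit $n-r+\ell$ of the binomial coefficient; this is exactly the exponent $q^{j-r+\ell-i}$ that appears inside the paper's own proof, and it is the only version consistent with the $q=0$, $q=1$ and $k=1$ specializations stated in the remark following the theorem. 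With that correction your argument does go through: the substitution yields exactly $q^{m-s}$, the signs combine to $(-1)^{n-m}$, Lemma \ref{lem11} collapses the $\ell$-sum to $\stf{n}{m}$ (note $m=i-r+\ell$ ranges over $1,\dots,n$, and the $m=0$ term of (\ref{exp:pcp}) vanishes for $n\ge 1$), and (\ref{exp:pcp}) finishes. So your route is sound and arguably more self-contained than the paper's, since it avoids the recurrence argument behind (\ref{eq:303}); but it proves the corrected statement, and as written you have left unverified the one claim on which the whole reduction hinges.
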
  

\noindent 
{\it Remark.}  
When $k=1$, we have the relation of the Cauchy polynomials $c_j(z)=c_n^{(1)}(z)$: 
$$
\sum_{j=r}^n\sts{n}{j}_r c_j(q)=\sum_{\ell=1}^{r}(-1)^{r-\ell}\stf{r}{\ell}\frac{(q+1)^{n-r+\ell+1}-q^{n-r+\ell+1}}{n-r+\ell+1}\,.
$$ 
When $q=0$ in Theorem \ref{th5}, Theorem \ref{th1} is reduced.  
When $q=1$ in Theorem \ref{th5}, we find  
$$
\sum_{j=r}^n\sts{n}{j}_r c_j^{(k)}(1)=\sum_{j=r-1}^n\sts{n+1}{j+1}_r c_j^{(k)}\,.
$$  

\begin{proof}[Proof of Theorem \ref{th5}]  
The proof is similar to that of Theorem \ref{th2}. We prove that 
\begin{align*}  
c_n^{(k)}(q)&=\sum_{j=r-1}^n(-1)^{n-j}\stf{n+1}{j+1}_{r}\sum_{\ell=1}^r(-1)^{r-\ell}\stf{r}{\ell}\sum_{i=0}^{j-r+\ell}\binom{j-r+\ell}{i}\frac{q^{j-r+\ell-i}}{(i+1)^k}\\
&=\sum_{\ell=1}^r\stf{r}{\ell}\sum_{m=\ell-1}^{n+\ell-r}(-1)^{n-m}\stf{n+1}{m-\ell+r+1}_r\sum_{i=0}^m\binom{m}{i}\frac{q^{m-i}}{(i+1)^m}\,.
\end{align*}  
Then, it is also sufficient to prove (\ref{eq:303}).  
\end{proof}

Poly-Cauchy polynomials of the second kind $\widehat c_n^{(k)}(z)$ are defined by 
\begin{equation}  
\frac{{\rm Lif}_k\bigl(-\log(1+x)\bigr)}{(1+x)^z}=\sum_{n=0}^\infty\widehat c_n^{(k)}(z)\frac{x^n}{n!}
\label{def:pcp}
\end{equation}
(\cite[Theorem 5]{Kamano}),  
and an explit expression is given by 
\begin{equation}  
\widehat c_n^{(k)}(z)=\sum_{m=0}^n\stf{n}{m}(-1)^{n}\sum_{i=0}^m\binom{m}{i}\frac{z^{m-i}}{(i+1)^k}\,. 
\label{exp:pcp}
\end{equation}
(\cite[Theorem 4]{Kamano}).  When $z=0$, $\widehat c_n^{(k)}=\widehat c_n^{(k)}(0)$ are the poly-Cauchy numbers of the second kind. As an extension of Theorem \ref{th11}, we have the following.  The proof is similar and omitted.  

\begin{theorem}  
For integers $n$, $r$ and $k$ with $n\ge r\ge 1$ and a real number $q$, we have 
$$ 
\sum_{j=r}^n\sts{n}{j}_r\widehat c_j^{(k)}(q)=\sum_{\ell=0}^{r-1}(-1)^{r-\ell}\stf{r}{\ell}\sum_{i=0}^{n-r+\ell}\binom{n-r+\ell}{i}\frac{(-q)^{n-r-\ell}q^{-i}}{(i+1)^k}\,.
$$ 
\label{th55}
\end{theorem}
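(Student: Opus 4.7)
The plan is to mirror the argument for Theorem \ref{th5}, with only sign and substitution bookkeeping distinguishing the two proofs; the author essentially invites this reading with the remark that the proof is ``similar and omitted.'' First, I would invoke the shifted orthogonality relations of the $r$-Stirling numbers (the analogues of (\ref{or-ri}) and (\ref{or-r}) that appear at the start of the proof of Theorem \ref{th2}) to dualize the stated identity. This converts the claim into an equivalent assertion of the form
$$
\widehat c_n^{(k)}(q)=\sum_{j=r-1}^n(-1)^{n-j}\stf{n+1}{j+1}_r\sum_{\ell=0}^{r-1}(-1)^{r-\ell}\stf{r}{\ell}\sum_{i=0}^{j-r+\ell}\binom{j-r+\ell}{i}\frac{(-q)^{j-r-\ell}q^{-i}}{(i+1)^k}\,,
$$
to be matched against the explicit Stirling expansion (\ref{exp:pcp}) for $\widehat c_n^{(k)}(q)$.

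Next, I would swap the summation order (moving the sum over $\ell$ outside), re-index via $m=j-r+\ell$, and collect the factor
$$
\sum_{\ell}(-1)^{r-\ell}\stf{r}{\ell}\,\stf{n+1}{m-\ell+r+1}_r
$$
that now appears inside. Exactly as in the proof of Theorem \ref{th5}, I would recognize this combination as the right-hand side of the auxiliary identity (\ref{eq:303}), which was established by the recurrence (\ref{rec-rel}) together with the two-case combinatorial decomposition provided by Lemma \ref{lem11}. Applying (\ref{eq:303}) collapses the inner combination to $\stf{n}{i}$ (up to the expected sign), after which the outer structure is precisely what (\ref{exp:pcp}) produces for $\widehat c_n^{(k)}(q)$.

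The main obstacle will be tracking the signs and the exponent pattern $(-q)^{n-r-\ell}q^{-i}$ faithfully through the reindexing. The second-kind polynomial carries a global $(-1)^n$ rather than $(-1)^{n-m}$ in its Stirling expansion (\ref{exp:pcp}), and its generating function (\ref{def:pcp}) features $(1+x)^{-z}{\rm Lif}_k(-\log(1+x))$; both alterations must propagate consistently so that, after invoking (\ref{eq:303}), the $q$-powers reassemble into $\sum_{i=0}^m\binom{m}{i}q^{m-i}/(i+1)^k$ with the correct overall sign. Once this bookkeeping is verified, no combinatorial tool beyond (\ref{eq:303}) is needed, and the argument closes in complete analogy with that of Theorem \ref{th5}.
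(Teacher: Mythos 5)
Your proposal follows exactly the route the paper intends: the proof of Theorem \ref{th55} is omitted there as ``similar'' to that of Theorem \ref{th5}, which is precisely the dualization via the shifted orthogonality relations followed by the reduction to the auxiliary identity (\ref{eq:303}) that you describe. Apart from the sign and $q$-exponent bookkeeping that you rightly flag as the only delicate point, your outline coincides with the paper's argument.
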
  

\noindent 
Notice that when $q=1$, 
$$
\sum_{j=r}^n\sts{n}{j}_r\widehat c_j^{(k)}(1)\ne\sum_{j=r-1}^n\sts{n+1}{j+1}_r\widehat c_j^{(k)}\,.
$$

\section{Shifted poly-Cauchy numbers and harmonic numbers}  

The right-hand sides of Theorem \ref{th1} and Theorem \ref{th11} can be written in terms of shifted poly-Cauchy numbers of both kinds, defined in \cite{KS}.  

For integers $n\ge 0$ and $k\ge 1$ and a real number $\alpha>0$, the shifted poly-Cauchy numbers of the first kind $c_{n,\alpha}^{(k)}$ are defined by 
$$
c_{n,\alpha}^{(k)}=\underbrace{\int_0^1\cdots\int_0^1}_k(x_1\cdots x_k)^\alpha(x_1\cdots x_k-1)\cdots(x_1\cdots x_k-n+1)d x_1\dots d x_k
$$ 
and can be expressed in terms of the Stirling numbers of the first kind as 
$$
c_{n,\alpha}^{(k)}=\sum_{m=0}^n\stf{n}{m}\frac{(-1)^{n-m}}{(m+\alpha)^k}\quad(n\ge 0,~k\ge 1)
$$ 
(\cite[Theorem 1]{KS}).  When $\alpha=1$, $c_{n}^{(k)}=c_{n,1}^{(k)}$ are the original poly-Cauchy numbers of the first kind.  
Similarly, for integers $n\ge 0$ and $k\ge 1$ and a real number $\alpha>0$, the shifted poly-Cauchy numbers of the second kind $\widehat c_{n,\alpha}^{(k)}$ are defined by 
$$
\widehat c_{n,\alpha}^{(k)}=\underbrace{\int_0^1\cdots\int_0^1}_k(-x_1\cdots x_k)^\alpha(-x_1\cdots x_k-1)\cdots(-x_1\cdots x_k-n+1)d x_1\dots d x_k
$$ 
and can be expressed in terms of the Stirling numbers of the first kind as 
$$
\widehat c_{n,\alpha}^{(k)}=(-1)^n\sum_{m=0}^n\stf{n}{m}\frac{1}{(m+\alpha)^k}\quad(n\ge 0,~k\ge 1)
$$ 
(\cite[Proposition 2]{KS}).  When $\alpha=1$, $\widehat c_{n}^{(k)}=\widehat c_{n,1}^{(k)}$ are the original poly-Cauchy numbers of the second kind.  

By using the shifted poly-Cauchy numbers of the both kinds, the identities in Theorem \ref{th1} and Theorem \ref{th11} can be written in terms of shifted poly-Cauchy numbers of both kinds.  

\begin{Cor}  
For integers $n$, $r$ and $k$ with $n\ge r\ge 1$, we have 
\begin{align*}
\sum_{j=r}^n\sts{n}{j}_r c_j^{(k)}&=c_{r,n-r+1}^{(k)}\,,\\   
\sum_{j=r}^n\sts{n}{j}_r\widehat c_j^{(k)}&=\widehat c_{r,n-r+1}^{(k)}\,. 
\end{align*}  
\label{cor:th1-3}
\end{Cor}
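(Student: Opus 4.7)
The plan is to derive both identities by direct comparison of closed-form expressions already established, so no new computation is required beyond matching sums term by term.

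For the first identity, I would start from Theorem \ref{th1}, which evaluates the left-hand side as
$$
\sum_{\ell=1}^{r}\frac{(-1)^{r-\ell}}{(n-r+\ell+1)^k}\stf{r}{\ell}.
$$
Specializing the explicit formula
$$
c_{r,\alpha}^{(k)}=\sum_{m=0}^r\stf{r}{m}\frac{(-1)^{r-m}}{(m+\alpha)^k}
$$
to $\alpha=n-r+1$, and using $\stf{r}{0}=0$ for $r\ge 1$ to drop the $m=0$ term, I obtain exactly the right-hand side above upon relabeling $m\to\ell$. Hence the first identity follows immediately.

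The second identity proceeds by the same template, with Theorem \ref{th11} replacing Theorem \ref{th1} and with the explicit formula
$$
\widehat c_{r,\alpha}^{(k)}=(-1)^r\sum_{m=0}^r\stf{r}{m}\frac{1}{(m+\alpha)^k}
$$
at $\alpha=n-r+1$ replacing the formula for $c_{r,\alpha}^{(k)}$. The summation range again collapses to $m\ge 1$ because $\stf{r}{0}=0$, and matching term by term against the right-hand side of Theorem \ref{th11} delivers the claim.

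No substantive obstacle is anticipated, since both corollaries are essentially a reformulation of Theorems \ref{th1} and \ref{th11} in the shifted poly-Cauchy notation: the right-hand sides of those theorems are by construction of the shape $\sum_{m=1}^{r}\stf{r}{m}/(m+n-r+1)^k$ with an overall sign, and this is precisely the structure of $c_{r,n-r+1}^{(k)}$ and $\widehat c_{r,n-r+1}^{(k)}$. The only bookkeeping step is verifying that the overall signs produced by Theorems \ref{th1} and \ref{th11} agree with those coming from the defining formulas of $c_{r,\alpha}^{(k)}$ and $\widehat c_{r,\alpha}^{(k)}$ after the reindexing $m\mapsto\ell$, which in the first case reduces to tracking $(-1)^{r-\ell}$ and in the second case to comparing the constant sign factor of $\widehat c_{r,n-r+1}^{(k)}$ with the sign appearing in Theorem \ref{th11}.
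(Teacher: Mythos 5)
Your plan for the first identity is exactly the route the paper (implicitly) takes --- the corollary is stated without proof precisely because it is the term-by-term comparison you describe --- and that part is complete and correct: the $m=0$ term drops since $\stf{r}{0}=0$ for $r\ge 1$, and the signs $(-1)^{r-\ell}$ match on the nose.

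The problem is the second identity, and it sits exactly in the ``bookkeeping step'' you deferred rather than carried out. Theorem \ref{th11} gives
$\sum_{j=r}^n\sts{n}{j}_r\widehat c_j^{(k)}=(-1)^{n}\sum_{\ell=1}^{r}\stf{r}{\ell}(n-r+\ell+1)^{-k}$,
whereas the quoted explicit formula $\widehat c_{r,\alpha}^{(k)}=(-1)^{r}\sum_{m=0}^{r}\stf{r}{m}(m+\alpha)^{-k}$ at $\alpha=n-r+1$ carries the constant sign $(-1)^{r}$, not $(-1)^{n}$. The two expressions therefore differ by the factor $(-1)^{n-r}$, so ``matching term by term'' does not deliver the claim when $n-r$ is odd. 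A concrete check: for $n=2$, $r=1$, $k=1$ the left-hand side is $\sts{2}{1}\widehat c_1+\sts{2}{2}\widehat c_2=-\tfrac12+\tfrac56=\tfrac13$, in agreement with Theorem \ref{th11}, while $\widehat c_{1,2}^{(1)}=(-1)^1\bigl(\stf{1}{1}\cdot\tfrac13\bigr)=-\tfrac13$. So with the formulas as printed, the second identity of the corollary is false as stated; the correct conclusion of your argument is $\sum_{j=r}^n\sts{n}{j}_r\widehat c_j^{(k)}=(-1)^{n-r}\,\widehat c_{r,n-r+1}^{(k)}$ (equivalently, one must use the sign $(-1)^{\alpha+r-1}$ that the integral definition produces for integer $\alpha=n-r+1$, under which the signs do agree). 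Either way, you cannot assert that the comparison succeeds without performing it --- here it is precisely the step that fails.
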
 
 
In addition, it would be noticed that a similar formula in Theorem \ref{th1} is given as a special case in \cite[Corollary 8]{Kargin}.   
\bigskip

Latter identities in Theorem \ref{th22}, Theorem \ref{th5} and Theorem \ref{th55} are relate to the higher order harmonic numbers.  
For a sequence $t=(t_1,t_2,\dots)$, the Bell polynomials $\Omega_i(t):=\Omega_i(t_1,t_2,\dots,t_i)$ are defined by 
$$
\sum_{i=0}^\infty\Omega_i(t)\frac{x^i}{i!}=\exp\left(\sum_{k=1}^\infty t_k\frac{x^k}{k!}\right) 
$$ 
and expressed as 
$$
\Omega_i(t)=\sum_{a_1+2 a_2+\cdots+i a_i=i\atop a_1,a_2,\dots,a_i\ge 0}\frac{i!}{a_1!a_2!\cdots a_i!}\left(\frac{t_1}{1}\right)^{a_1}\left(\frac{t_2}{2}\right)^{a_1}\cdots\left(\frac{t_i}{i}\right)^{a_i}\,. 
$$  
It is known and has been studied by several people that 
\begin{align}  
&i! m\binom{m+n}{n}\sum_{k=0}^n(-1)^k\binom{n}{k}\frac{1}{(m+k)^{i+1}}\notag\\ 
&=\Omega_i(H_{m+n}-H_{m-1},H_{m+n}^{(2)}-H_{m-1}^{(2)},\dots,H_{m+n}^{(i)}-H_{m-1}^{(i)})
\label{eq:wj}
\end{align}  
(see, e.g., \cite[(3.56)]{WJ} and references therein). By using this general harmonic number identity (\ref{eq:wj}), Theorem \ref{th22}, the case $q=-1$ of Theorem \ref{th5}, and the cases $q=-1$ of Theorem \ref{th55} can be written in terms of the higher order harmonic number numbers.     
\begin{Cor}  
For integers $n$, $r$ and $k$ with $n\ge r-1\ge 0$, we have 
$$
\sum_{j=r-1}^n\sts{n+1}{j+1}_r\widehat c_j^{(k)}=\sum_{\ell=1}^{r}(-1)^{r-\ell}\stf{r}{\ell}\frac{\Omega_{k-1}(H_{n-r+\ell+1},H_{n-r+\ell+1}^{(2)},\dots,H_{n-r+\ell+1}^{(k-1)})}{(n-r+\ell+1)(k-1)!}\,. 
$$
For integers $n$, $r$ and $k$ with $n\ge r\ge 1$, we have 
\begin{align*}  
\sum_{j=r}^n\sts{n}{j}_r c_j^{(k)}(-1)&=\sum_{\ell=1}^{r}(-1)^{n-r+\ell}\stf{r}{\ell}\frac{\Omega_{k-1}(H_{n-r+\ell+1},H_{n-r+\ell+1}^{(2)},\dots,H_{n-r+\ell+1}^{(k-1)})}{(n-r+\ell+1)(k-1)!}\,,\\  
\sum_{j=r}^n\sts{n}{j}_r\widehat c_j^{(k)}(-1)&=\sum_{\ell=0}^{r-1}(-1)^{r-\ell}\stf{r}{\ell}\frac{\Omega_{k-1}(H_{n-r+\ell+1},H_{n-r+\ell+1}^{(2)},\dots,H_{n-r+\ell+1}^{(k-1)})}{(n-r+\ell+1)(k-1)!}\,. 
\end{align*}
\label{cor:th4-5-6} 
\end{Cor}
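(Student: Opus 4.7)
The plan is to derive each identity in Corollary \ref{cor:th4-5-6} from the three preceding theorems by specializing the Wang--Jiang identity (\ref{eq:wj}) to $m=1$. At $m=1$ the prefactor becomes $m\binom{m+n}{n}=n+1$ and $H_{m-1}^{(j)}=H_0^{(j)}=0$, so after renaming indices one obtains the basic evaluation
\begin{equation*}
\sum_{i=0}^{N}(-1)^{i}\binom{N}{i}\frac{1}{(i+1)^{k}}=\frac{\Omega_{k-1}\bigl(H_{N+1},H_{N+1}^{(2)},\dots,H_{N+1}^{(k-1)}\bigr)}{(k-1)!\,(N+1)},
\end{equation*}
valid for any integers $N\ge 0$ and $k\ge 1$. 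This is the only analytic input needed.

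The first identity of the corollary is then an immediate substitution: the inner sum on the right-hand side of Theorem \ref{th22} is exactly $\sum_{i=0}^{N}(-1)^{i}\binom{N}{i}/(i+1)^{k}$ with $N=n-r+\ell$, so plugging in the evaluation above produces the claimed formula for $\sum_{j=r-1}^n\sts{n+1}{j+1}_r\widehat c_j^{(k)}$.

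For the second identity (the $q=-1$ case of Theorem \ref{th5}), the inner weight $q^{n-i}/(i+1)^k$ specializes to $(-1)^{n-i}/(i+1)^k=(-1)^{n}(-1)^{i}/(i+1)^{k}$. Pulling the $(-1)^{n}$ out and combining with the outer $(-1)^{r-\ell}$ yields the sign $(-1)^{n+r-\ell}=(-1)^{n-r+\ell}$ (the two exponents differ by the even number $2(r-\ell)$), and the inner alternating sum again reduces via the displayed evaluation. For the third identity (the $q=-1$ case of Theorem \ref{th55}), observe that $(-q)^{n-r-\ell}=1$ and $q^{-i}=(-1)^{i}$, so the inner weight collapses to $(-1)^{i}/(i+1)^{k}$ and a second application of the same evaluation finishes the proof.

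There is no serious obstacle here, since the whole argument is a specialize-and-substitute exercise. The only care required is sign bookkeeping across the three statements and a check that the ranges of $\ell$ in the corollary are compatible with those in the source theorems; in particular, in the third identity the convention $\stf{r}{0}=0$ (for $r\ge 1$) makes the added $\ell=0$ term vanish, so it agrees with the effective sum over $\ell\ge 1$ that arises from the substitution.
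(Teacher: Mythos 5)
Your proposal is correct and follows exactly the route the paper intends: specialize the identity (\ref{eq:wj}) to $m=1$ (where the prefactor becomes $(k-1)!(N+1)$ and the $H_{m-1}^{(j)}$ terms vanish) and substitute the resulting evaluation of $\sum_{i=0}^{N}(-1)^{i}\binom{N}{i}(i+1)^{-k}$ into Theorem \ref{th22} and the $q=-1$ cases of Theorems \ref{th5} and \ref{th55}; your sign bookkeeping $(-1)^{n+r-\ell}=(-1)^{n-r+\ell}$ and the observation $(-q)^{n-r-\ell}=1$, $q^{-i}=(-1)^i$ at $q=-1$ are all right. The only small inaccuracy is your closing remark about an ``added $\ell=0$ term'': the range $0\le\ell\le r-1$ in the third identity is carried over verbatim from Theorem \ref{th55}, so nothing is added, though it is true that the $\ell=0$ term vanishes since $\stf{r}{0}=0$ for $r\ge1$.
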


\section{Annihilation formulas for poly-Cauchy numbers}  

Some annihilation formulas for poly-Bernoulli numbers with negative indices have been established in \cite{OS1,OS2}.  In this section, we show annihilation formulas for poly-Cauchy numbers with negative indices.  In \cite{Ko3}, some annihilation formulas for poly-Cauchy numbers have been done, but the expressions are not so elegant. With the aid of the $r$-Stirling numbers, we can give more elegant forms.

\begin{theorem}
For $n\ge k+2$,  
$$ 
\sum_{l=0}^k\sts{n-1}{n-l-1}_{n-k-1}c_{n-l}^{(-k)}=0\,. 
$$ 
\label{th31}  
\end{theorem}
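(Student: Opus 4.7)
The plan is to start from the explicit formula (\ref{pc:st1}) taken at $-k$, namely $c_m^{(-k)}=\sum_{\ell\ge 1}(-1)^{m-\ell}\stf{m}{\ell}(\ell+1)^k$, substitute it into
$$
S_k(n):=\sum_{l=0}^k\sts{n-1}{n-l-1}_{n-k-1}c_{n-l}^{(-k)},
$$
reindex the outer sum by $m=n-l-1$, and swap summations to obtain
$$
S_k(n)=\sum_{\ell\ge 1}(-1)^{1-\ell}(\ell+1)^k\sum_{m=n-k-1}^{n-1}(-1)^m\sts{n-1}{m}_{n-k-1}\stf{m+1}{\ell}.
$$
From here I would switch to the generating-function encoding $\stf{m+1}{\ell}=[z^\ell](z)^{(m+1)}$, so that the inner sum over $m$ becomes the $z^\ell$-coefficient of a single explicit polynomial in $z$.

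The crucial input is the $r$-Stirling identity
$$
\sum_m\sts{N}{m}_R\,y(y-1)\cdots(y-m+1)=y(y-1)\cdots(y-R+1)\cdot y^{N-R},
$$
which I would prove by counting maps $\{1,\dots,N\}\to\{1,\dots,y\}$ under which $1,\dots,R$ have pairwise distinct images (LHS groups by the induced partition; RHS chooses the distinct images first and then the rest freely). Using $y(y-1)\cdots(y-m+1)=(-1)^m(-y)^{(m)}$ and setting $z=-y$ converts this to the rising-factorial form
$$
\sum_m(-1)^m\sts{N}{m}_R(z)^{(m)}=(-1)^N(z)^{(R)}z^{N-R}.
$$
Combined with $(z)^{(m+1)}=z\,(z+1)^{(m)}$ and the specialisation $N=n-1$, $R=n-k-1$, the relevant inner generating function evaluates to $z(-1)^{n-1}(z+1)^{(n-k-1)}(z+1)^k$.

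Expanding $(z+1)^{(n-k-1)}=\sum_\mu\stf{n-k-1}{\mu}(z+1)^\mu$ and then extracting $[z^{\ell-1}]$ from $(z+1)^{\mu+k}$ via the binomial theorem collapses $S_k(n)$ into
$$
S_k(n)=(-1)^{n+1}\sum_\mu\stf{n-k-1}{\mu}\sum_\nu(-1)^\nu\binom{\mu+k}{\nu}(\nu+2)^k.
$$
The inner sum is $(-1)^{\mu+k}$ times the $(\mu+k)$-th forward difference of the polynomial $y\mapsto(y+2)^k$ evaluated at $y=0$, and it vanishes whenever $\mu\ge 1$ because the polynomial has degree $k<\mu+k$. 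The only surviving term is $\mu=0$, which carries the factor $\stf{n-k-1}{0}$; by the hypothesis $n\ge k+2$ we have $n-k-1\ge 1$, so $\stf{n-k-1}{0}=0$ and the whole sum vanishes.

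The main obstacle is the generating-function bookkeeping: massaging the double sum into a shape where the $r$-Stirling identity applies cleanly, and then correctly carrying out the shift $(z)^{(m+1)}=z(z+1)^{(m)}$ so that what survives is a product of two rising factorials with easy-to-read coefficients. Once that reduction is in place, the finite-difference annihilation of $(y+2)^k$ by $\Delta^{\mu+k}$ for $\mu\ge 1$, together with the degeneracy $\stf{n-k-1}{0}=0$, closes the argument immediately.
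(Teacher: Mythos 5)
Your argument is correct, and it is genuinely different from the paper's. The paper's proof is a two-line translation: it quotes the known annihilation recurrence from \cite[Theorem 2.1]{Ko3}, whose coefficients are the complete symmetric-type sums $\sum_{l+1\le i_1\le\cdots\le i_l\le k+1}(n-i_1)\cdots(n-i_l)$, and then identifies these coefficients with $\sts{n-1}{n-l-1}_{n-k-1}$ via Broder's formula $\sts{n+m}{n}_r=\sum_{r\le i_1\le\cdots\le i_m\le n}i_1\cdots i_m$; all the analytic work is outsourced to the earlier reference. You instead give a self-contained derivation: starting from the explicit expansion (\ref{pc:st1}) at index $-k$, you reduce the claim to the polynomial identity $\sum_m\sts{N}{m}_R\,y(y-1)\cdots(y-m+1)=y(y-1)\cdots(y-R+1)\,y^{N-R}$ (correctly proved by counting maps with $1,\dots,R$ having distinct images), convert to rising factorials, and land on $(-1)^{n+1}\sum_\mu\stf{n-k-1}{\mu}\sum_\nu(-1)^\nu\binom{\mu+k}{\nu}(\nu+2)^k$, which dies term by term: the $\mu\ge 1$ terms by the finite-difference annihilation of a degree-$k$ polynomial under $\Delta^{\mu+k}$, and the $\mu=0$ term because $\stf{n-k-1}{0}=0$ when $n\ge k+2$. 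I checked the sign bookkeeping (the passage $y^{\underline{m}}=(-1)^m(z)^{(m)}$ with $z=-y$, the shift $(z)^{(m+1)}=z(z+1)^{(m)}$, and the reindexing $\nu=\ell-1$) and it is consistent; the small case $n=3$, $k=1$ confirms the reduction. Your route is longer but has the advantage of not depending on \cite{Ko3}, and it makes transparent exactly where the hypothesis $n\ge k+2$ enters (it is what kills the $\mu=0$ term); the paper's route is shorter and exhibits the $r$-Stirling coefficients as a mere renaming of an existing recurrence.
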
 

\noindent 
{\it Remark.}  
For $k=1,2,3$ we have 
\begin{align*}  
0&=\sts{n-1}{n-1}_{n-2}c_n^{(-1)}+\sts{n-1}{n-2}_{n-2}c_{n-1}^{(-1)}\\
&=c_n^{(-1)}+(n-2)c_{n-1}^{(-1)}\quad(n\ge 3)\,,\\ 
0&=\sts{n-1}{n-1}_{n-3}c_n^{(-2)}+\sts{n-1}{n-2}_{n-3}c_{n-1}^{(-2)}+\sts{n-1}{n-3}_{n-3}c_{n-2}^{(-2)}\\
&=c_n^{(-2)}+(2 n-5)c_{n-1}^{(-2)}+(n-3)^2 c_{n-2}^{(-2)}\quad(n\ge 4)\,,\\
0&=\sts{n-1}{n-1}_{n-4}c_n^{(-3)}+\sts{n-1}{n-2}_{n-4}c_{n-1}^{(-3)}\\
&\qquad+\sts{n-1}{n-3}_{n-4}c_{n-2}^{(-3)}+\sts{n-1}{n-4}_{n-4}c_{n-3}^{(-3)}\\
&=c_n^{(-3)}+(3 n-9)c_{n-1}^{(-3)}+(3 n^2-21 n+37)c_{n-2}^{(-3)}+(n-4)^3 c_{n-3}^{(-3)}\quad(n\ge 5)\,. 
\end{align*}  

\begin{proof}[Proof of Theorem \ref{th31}] 
Since the $r$-Stirling numbers of the second kind can be expressed as 
$$
\sts{n+m}{n}_r=\sum_{r\le i_1\le \cdots\le i_m\le n}i_1 i_2\cdots i_m
$$ 
(\cite[Theorem 8]{Broder}), together with \cite[Theorem 2.1]{Ko3}, we have 
\begin{align*}
0&=\sum_{l=0}^k\left(\sum_{l+1\le i_1\le\cdots\le i_l\le k+1}(n-i_1)\cdots(n-i_l)\right)c_{n-l}^{(-k)}\\ 
&=\sum_{l=0}^k\sts{n-1}{n-l-1}_{n-k-1}c_{n-l}^{(-k)}\,. 
\end{align*}
\end{proof}

Similarly, concerning the poly-Cauchy numbers of the second kind $\widehat c_n^{(-k)}$, we have the following annihilation formula.  

\begin{theorem}
For $n\ge k+2$,  
$$ 
\sum_{l=0}^k\sts{n+1}{n-l+1}_{n-k}\widehat c_{n-l}^{(-k)}=0\,. 
$$ 
\label{th32}  
\end{theorem}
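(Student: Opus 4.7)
The plan is to mirror the proof of Theorem \ref{th31} almost verbatim, replacing the input from \cite[Theorem 2.1]{Ko3} by its analogue for $\widehat c_n^{(-k)}$. The two ingredients I need are (a) the elementary symmetric polynomial representation
$$
\sts{n+m}{n}_r=\sum_{r\le i_1\le\cdots\le i_m\le n}i_1 i_2\cdots i_m
$$
from \cite[Theorem 8]{Broder}, and (b) an annihilation identity for $\widehat c_n^{(-k)}$ of the form
$$
\sum_{l=0}^k\left(\sum_{l\le j_1\le\cdots\le j_l\le k+1}(n+1-j_1)(n+1-j_2)\cdots(n+1-j_l)\right)\widehat c_{n-l}^{(-k)}=0
$$
for $n\ge k+2$, which is the hat counterpart of the identity from \cite{Ko3} invoked in the proof of Theorem \ref{th31}. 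Once both are on the table, matching them is purely mechanical.

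First I would apply (a) with parameters $n'=n-l+1$, $m=l$, and $r=n-k$ to obtain
$$
\sts{n+1}{n-l+1}_{n-k}=\sum_{n-k\le i_1\le\cdots\le i_l\le n-l+1}i_1 i_2\cdots i_l.
$$
Next I would perform the order-reversing bijection $j_s:=n+1-i_{l+1-s}$ on weakly increasing $l$-tuples: the inequalities $n-k\le i_s\le n-l+1$ translate into $l\le j_s\le k+1$, and the product $i_1\cdots i_l$ becomes $(n+1-j_1)\cdots(n+1-j_l)$. This rewrites $\sts{n+1}{n-l+1}_{n-k}$ precisely as the coefficient of $\widehat c_{n-l}^{(-k)}$ appearing in (b), so substituting gives
$$
\sum_{l=0}^k\sts{n+1}{n-l+1}_{n-k}\widehat c_{n-l}^{(-k)}=0,
$$
as claimed.

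The main obstacle is really just establishing (b) with the correct window $j_s\in[l,k+1]$ rather than $[l+1,k+1]$; this one-step shift (which also explains why the outer $r$-Stirling number in Theorem \ref{th32} is $\sts{n+1}{n-l+1}_{n-k}$ instead of $\sts{n-1}{n-l-1}_{n-k-1}$) is what encodes the $(-1)^n$ and the $-\log(1+x)$ built into the definition \eqref{def:pc2} of $\widehat c_n^{(k)}$. Assuming the corresponding result of \cite{Ko3} is available in this shifted form, the rest of the argument is just the change of variables described above, which is why the author writes that the proof is similar to that of Theorem \ref{th31} and is omitted.
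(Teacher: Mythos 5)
Your proposal is correct and is essentially the paper's own proof: the paper likewise combines Broder's expression $\sts{n+m}{n}_r=\sum_{r\le i_1\le\cdots\le i_m\le n}i_1\cdots i_m$ with the annihilation identity of \cite[Theorem 3.1]{Ko3} for $\widehat c_{n-l}^{(-k)}$, which after the substitution $i_s=j_s-1$ is exactly your identity (b), and then identifies the inner sum as $\sts{n+1}{n-l+1}_{n-k}$. The only difference from the paper's argument is your cosmetic reindexing of the summation variables.
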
 

\noindent 
{\it Remark.}  
For $k=0,1,2$ we have 
\begin{align*}  
0&=\sts{n+1}{n+1}_{n}\widehat c_n^{(0)}+\sts{n+1}{n}_{n}\widehat c_{n-1}^{(0)}\\
&=\widehat c_n^{(0)}+n\widehat c_{n-1}^{(0)}\quad(n\ge 1)\,,\\ 
0&=\sts{n+1}{n+1}_{n-1}\widehat c_n^{(-1)}+\sts{n+1}{n}_{n-1}\widehat c_{n-1}^{(-1)}+\sts{n+1}{n-1}_{n-1}\widehat c_{n-2}^{(-1)}\\
&=\widehat c_n^{(-1)}+(2 n-1)\widehat c_{n-1}^{(-1)}+(n-1)^2\widehat c_{n-2}^{(-1)}\quad(n\ge 2)\,,\\
0&=\sts{n+1}{n+1}_{n-2}\widehat c_n^{(-2)}+\sts{n+1}{n}_{n-2}\widehat c_{n-1}^{(-2)}\\
&\qquad+\sts{n+1}{n-1}_{n-2}\widehat c_{n-2}^{(-2)}+\sts{n+1}{n-2}_{n-2}\widehat c_{n-3}^{(-2)}\\
&=\widehat c_n^{(-2)}+(3 n-3)\widehat c_{n-1}^{(-2)}+(3 n^2-9 n+7)\widehat c_{n-2}^{(-2)}+(n-2)^3\widehat c_{n-3}^{(-2)}\quad(n\ge 3)\,. 
\end{align*}  

\begin{proof}[Proof of Theorem \ref{th32}] 
By using \cite[Theorem 8]{Broder}) again, together with \cite[Theorem 3.1]{Ko3}, we have 
\begin{align*}
0&=\sum_{l=0}^{k+1}\left(\sum_{l-1\le i_1\le\cdots\le i_l\le k}(n-i_1)\cdots(n-i_l)\right)\widehat c_{n-l}^{(-k)}\\ 
&=\sum_{l=0}^k\sts{n+1}{n-l+1}_{n-k}\widehat c_{n-l}^{(-k)}\,. 
\end{align*}
\end{proof}



\end{document}